\DeclareMathOperator{\Id}{Id}
\numberwithin{equation}{section}
\newtheorem{theorem}{Theorem}[section]
\newtheorem{definition}{Definition}[section]
\newtheorem{lemma}{Lemma}[section]
\newtheorem{remark}{Remark}[section]
\newcommand{\R}{\mathbb{R}}
\newcommand{\der}{\mathrm{d}}
\title{Unisolvent and minimal physical degrees of freedom for the second family of polynomial differential forms}
\author[$\dagger$]{Ludovico Bruni Bruno\thanks{email: ludovico.brunibruno@unitn.it}}
\author[$\dagger$]{Enrico Zampa\thanks{email: enrico.zampa@unitn.it}}
\affil[$\dagger$]{Dipartimento di Matematica, Università di Trento, via Sommarive 14, 38123, Povo (TN), Italia} %email: ludovico.brunibruno@unitn.it}
\date{May 2022}
\begin{document}
\maketitle

\begin{abstract}
The principal aim of this work is to provide a family of unisolvent and minimal physical degrees of freedom, called weights, for N\'ed\'elec second family of finite elements. Such elements are thought of as differential forms $ \mathcal{P}_r \Lambda^k (T)$ whose coefficients are polynomials of degree $ r $. We confine ourselves in the two dimensional case $ \R^2 $ since it is easy to visualise and offers a neat and elegant treatment; however, we present techniques that can be extended to $ n > 2 $ with some adjustments of technical details. In particular, we use techniques of homological algebra to obtain degrees of freedom for the whole diagram
$$ \mathcal{P}_r \Lambda^0 (T) \rightarrow \mathcal{P}_r \Lambda^1 (T) \rightarrow \mathcal{P}_r \Lambda^2 (T), $$
being $ T $ a $2$-simplex of $ \R^2 $. This work pairs its companions recently appeared for N\'ed\'elec first family of finite elements.
\end{abstract}

\section{Introduction} 
Degrees of freedom are one of the main ingredients of a finite element triple as defined by Ciarlet \cite{Ciarlet}. For standard polynomial Lagrange elements over simplices, the classical degrees of freedom are evaluations on the principal lattice $L_r(T)$ of top-dimensional simplices $T$ of the triangulation. These degrees of freedom have a clear physical meaning: if $u_h$ is the numerical solution, then degrees of freedom are just the values of the exact solution at some points of the mesh. On the other side, for the polynomial differential forms families $\mathcal{P}_r^-\Lambda^k$ and $\mathcal{P}_r\Lambda^k$ described in \cite{FEEC}, the standard degrees of freedom are the so called \emph{moments}, that is, integrals against $(d-k)$-forms on $d$-subsimplices, for $d = k,\ldots,n$, where $n = \dim T $ is the dimension of the domain of the problem. These degrees of freedom have some disadvantages, which we aim here to improve:
\begin{enumerate}
\item they lack an immediate physical interpretation;
\item the associated Vandermonde matrix is not well conditioned; 
\item they are difficult to implement. 
\end{enumerate}
To overcome these issues, another choice of degrees of freedom has been proposed in \cite{BossBook}. It consists in considering integrals over $k$-cells topologically contained in the top dimensional simplices. These degrees of freedom are called \emph{weights} or \emph{physical}, since they have a clear physical interpretation: circulations or fluxes for vector fields ($1$- and $n-1$-forms) and averages for densities ($n$-forms). Moreover, weights are a straightforward generalization of the evaluation-type degrees of freedom for scalar functions (for $k=0$, a $k$-cell is just a point and the integral is just the evaluation). 
Physical degrees of freedom for the first (or \emph{trimmed}) family $\mathcal{P}_r^-\Lambda^k$, whose features in the framework that we adopt here have been pointed out in several works, such as \cite{Gopa}, \cite{Hipt} and \cite{FEEC}, were studied extensively in \cite{RapettiBossavit}, \cite{ChristiansenRapetti} and more recently in \cite{ENUMATH} and \cite{nostropreprint}. A slightly different point of view is also offered in \cite{Lohi1} and \cite{Lohi2}. On the other side, for the second (or \emph{complete}) family $\mathcal{P}_r\Lambda^k$ the first physical degrees of freedom for the two dimensional case were proposed in \cite{ZAR} only recently, where however unisolvence was not proved, but only checked numerically. In this work, we stick to the two dimensional case and we provide different physical degrees of freedom for the second family and we rigorously prove the unisolvence using cohomological tools. Moreover, we provide numerical evidence of the well-conditioning of the associated Vandermonde matrix and we perform some interpolation tests.

We assume that the reader is familiar with standard notions in differential geometry and algebraic topology that are now common in most works on Finite Element Exterior Calculus and Finite Element Systems, such as differential forms, differential complexes, cellular complexes, chains, cochains, cohomology, de Rham maps, and so on. See section 2 of \cite{christiansen2009Foundations} for a concise introduction on these topics.  We however recall known and useful facts when setting the notation.

The outline of this work is as follows. In section \ref{sect:prel} we introduce basic definitions and tools. We recall known results and state Lemmas that we will use in the subsequent. In section \ref{sect:mainsect} we state the main results concerning the construction of unisolvent and minimal sets. In particular, confining ourselves in the case of $ \mathbb{R}^2 $, we identify a unisolvent and minimal sequence for N\'ed\'elec second family. In section \ref{sect:NumTest} we present some numerical results concerning the generalised Vandermonde matrices associated with the introduced families and the associated interpolators, comparing an example of convergence of a smooth, oscillating form with that of its differential. We summarise conclusions and propose future developments in section \ref{sect:concl}.

\section{Physical systems of degrees of freedom} \label{sect:prel}

In this section we recall the definition of a physical system of degrees of freedom and some results from \cite{ZAR}. 

Let $X$ be a compatible finite element system in the sense of Christiansen \cite{ChristiansenConstruction}, \cite{ChristiansenTopics}, \cite{ChristiansenRapetti} over the cellular complex $\mathcal{T}$. In particular, for each $T$ in $\mathcal{T}$ (of any dimension\footnote{In the finite element systems framework, one considers spaces of differential forms and degrees of freedom on cells of each dimensions, not only on top dimensional ones.}), for each $k = 0,1,2,\ldots, \dim T$, the following sequence is exact: 
\begin{equation*}
    0 \rightarrow \R \hookrightarrow X^0(T) \overset{\der}{\rightarrow} X^1(T) \overset{\der}{\rightarrow} \dots \overset{d}{\rightarrow} X^{\dim T}(T) \rightarrow 0.  
\end{equation*}
Here the first arrow is the inclusion and $\mathrm{d}$ denotes the exterior derivative. 
Moreover, for $T$ in $\mathcal{T}$ we denote with $\mathring{X}^k(T)$ the subspace of $X^k(T)$ made of all forms with zero trace on the boundary. 
\begin{definition}
A \emph{system of physical degrees of freedom} (physical sysdofs) $\mathcal{F}$ over $X$ is a choice, for each cell $T$ in $\mathcal{T}$ , for each $k = 0,1,2,\ldots, \dim T$, of a finite set $\mathring{\mathcal{F}}^k(T)\doteq \{s_1,\ldots,s_{\mathring{N}^k(T)}\}$ of non-overlapping $k$-cells. These cells induce functionals 
\begin{equation}
    \omega\mapsto w(\omega,s_i)\doteq \int_{s_i}\omega.
\end{equation}
We call $w(\omega,s_i)$ the \emph{weight} of $\omega$ on $s_i$. 
\end{definition}
The unisolvence of a physical system of degrees of freedom is defined in the obvious way. \begin{definition} 
A physical sysdofs is said to be \emph{unisolvent} if, for each $T$ in $\mathcal{T}$, for each $k=0,1,2,\ldots, \dim T$, the only form $\omega$ in $\mathring{X}^k(T)$
which satisfies 
\[ w(\omega,s) = 0, \quad\forall s\in\mathring{\mathcal{F}}^k(T) \]
is the zero form. 
\end{definition}
Clearly, a unisolvent physical sysdofs must satisfy the trivial necessary condition: for each $T$ in $\mathcal{T}$, for each $k = 0,1,2,\ldots$, $\mathring{N}^k(T) \geq \dim\mathring{X}^k(T)$ where $\mathring{N}^k(T)$ denotes the cardinality of the set $\mathring{\mathcal{F}}^k(T)$. This motivates the following definition.
\begin{definition}
A physical sysdofs is \emph{minimal} if, for each $T$ in $\mathcal{T}$, for each $k = 0,1,2,\ldots, \dim T$, the following equality holds:
\[ N^k(T) = \dim \mathring{X}^k(T). \] 
\end{definition}

From the properties of compatible finite element system we obtain the following equivalent definition of unisolvence and minimality, which is closer to classical one found in standard books on finite elements \cite{Ciarlet}. For each $S,T$ in $\mathcal{T}$ we write $S\leq T$ is $S$ is a subcell of $T$. Moreover, for $T$ in $\mathcal{T}$ and $k = 0,1,2,\ldots, \dim T$, write
\begin{equation}
    \mathcal{F}^k(T) \doteq \bigcup_{S\leq T}\mathring{\mathcal{F}}^k(S). 
\end{equation}
\begin{lemma} 
If a physical system of degrees of freedom $\mathcal{F}$ is unisolvent, then, for each top dimensional cell $T$ in $\mathcal{T}$, for each $k = 0,1,2,\ldots, \dim T$, the only form $\omega$ in $X^k(T)$ satisfying 
\begin{equation}
    w(\omega,s) = 0,\qquad \forall s\in\mathcal{F}^k(T) 
    \label{eq: alternative unisolvence}
\end{equation} 
is the zero form. Moreover $\mathcal{F}$ is minimal and unisolvent if and only if the above condition holds and  $N^k(T) = \dim X^k(T)$, where $N^k(T)$ denotes the cardinality of $\mathcal{F}^k(T)$.
\end{lemma}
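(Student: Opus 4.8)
The plan is to prove this lemma by connecting the "interior" notion of unisolvence (phrased on $\mathring{X}^k(T)$ with only the interior cells $\mathring{\mathcal{F}}^k(T)$) to the "global" notion (phrased on all of $X^k(T)$ with the full collection $\mathcal{F}^k(T)$ of cells on all subfaces). The key structural fact I would exploit is that the finite element system $X$ is \emph{compatible}: this means the trace maps behave well, so that a form on $T$ with vanishing weights on all subcells can be decomposed according to the face structure. Concretely, I expect the argument to proceed by induction on the dimension of $T$.

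\medskip

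First I would set up the induction. For the base case, $T$ is a $0$-cell (a vertex): here $X^0(T) = \R$, the only $k$ is $k=0$, and a $0$-cell weight is just point evaluation, so the statement is immediate. For the inductive step, assume the implication holds for every cell of dimension strictly less than $\dim T$. Take $\omega \in X^k(T)$ with $w(\omega, s) = 0$ for all $s \in \mathcal{F}^k(T)$. By the definition $\mathcal{F}^k(T) = \bigcup_{S \leq T} \mathring{\mathcal{F}}^k(S)$, for every \emph{proper} subcell $S < T$ the restriction (trace) $\omega|_S$ has vanishing weights against all of $\mathcal{F}^k(S) = \bigcup_{S' \leq S}\mathring{\mathcal{F}}^k(S')$. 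The induction hypothesis then forces $\omega|_S = 0$ for every proper subcell $S$, i.e. $\omega$ has zero trace on the entire boundary $\partial T$. Hence $\omega \in \mathring{X}^k(T)$. But now the remaining cells in $\mathcal{F}^k(T)$ that are interior to $T$ are exactly $\mathring{\mathcal{F}}^k(T)$, and $\omega$ has vanishing weights on all of them; the hypothesis that $\mathcal{F}$ is unisolvent (interior version) gives $\omega = 0$. This establishes \eqref{eq: alternative unisolvence}.

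\medskip

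For the second sentence, I would handle the counting equivalence. The sets $\{\mathring{\mathcal{F}}^k(S) : S \leq T\}$ are disjoint (each interior cell belongs to a unique subface), so $N^k(T) = \sum_{S \leq T} \mathring{N}^k(S)$. On the other hand, compatibility of the finite element system yields a direct-sum (trace) decomposition $X^k(T) = \bigoplus_{S \leq T} \mathring{X}^k(S)$ in the sense of dimensions, giving $\dim X^k(T) = \sum_{S \leq T} \dim \mathring{X}^k(S)$. Minimality asks that $\mathring{N}^k(S) = \dim \mathring{X}^k(S)$ for every $S$; summing over $S \leq T$ shows this is equivalent to $N^k(T) = \dim X^k(T)$, \emph{provided} we already know unisolvence so that no subface can have a deficit compensated by a surplus elsewhere. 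I would make this last point precise by noting that unisolvence forces $\mathring{N}^k(S) \geq \dim \mathring{X}^k(S)$ for each $S$, so equality of the sums forces equality termwise.

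\medskip

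The main obstacle I anticipate is the decomposition $X^k(T) = \bigoplus_{S \leq T} \mathring{X}^k(S)$ together with the claim that vanishing boundary trace is equivalent to vanishing weights on all \emph{proper} subfaces. This is precisely where the hypothesis that $X$ is a \emph{compatible} finite element system (in Christiansen's sense) must be invoked: compatibility is what guarantees that the trace maps are surjective onto the boundary system and that the interior spaces $\mathring{X}^k$ assemble into all of $X^k$. I would therefore be careful to cite the relevant structural properties from \cite{ChristiansenConstruction} rather than reproving them, and to check that the induction correctly matches interior cells of each subface with the trace of $\omega$ on that subface.
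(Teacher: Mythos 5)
Your argument is correct and follows essentially the same route as the paper: both show that vanishing weights on all subcells forces the trace of $\omega$ to vanish on subcells of increasing dimension (you phrase this as induction on $\dim T$, the paper as a sweep from the $k$-subcells of a fixed $T$ up to $T$ itself), so that $\omega\in\mathring{X}^k(T)$ and interior unisolvence finishes. The only divergence is the \emph{moreover} part, which the paper dispatches by citing Proposition 2.5 of \cite{ChristiansenRapetti}, while you sketch the underlying counting argument via $\dim X^k(T)=\sum_{S\leq T}\dim\mathring{X}^k(S)$ and termwise comparison --- the same structural facts, just spelled out.
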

\begin{proof}
Assume that $\mathcal{F}$ is unisolvent. Let $\omega\in X^k(T)$ satisfying condition \eqref{eq: alternative unisolvence}. Then let $S$ any $k$-subcell of $T$ and let $\iota_{S,T}:S\to T$. Clearly $\iota_{S,T}^*\omega$ belongs to $X^k(S)$, but since it is a $k$-form on a $k$-cell, its traces on the boundary of $S$ vanish by definition, therefore $\iota_{S,T}^*\omega$ actually belongs to $\mathring{X}^k(S)$. Then, by unisolvence, $\iota_{S,T}^*\omega = 0$. Let now $S$ be a $k+1$-subcell of $T$ and $S'$ be a $k$-cell belonging to the boundary of $S$. Then $\iota_{S',S}^*\iota_{S,T}^*\omega = \iota_{S',T}^*\omega = 0$ by the previous argument. Therefore $\iota_{S,T}^*\omega$ belongs to $\mathring{X}^k(S)$. Again, by unisolvence, $\iota_{S,T}^*\omega = 0$. Proceeding in this way, we obtain that $\omega\in\mathring{X}^k(T)$. Finally, unisolvence gives $\omega = 0$. For the stronger statement, see Proposition 2.5 in \cite{ChristiansenRapetti}.
\end{proof}

From the computational point of view, one may check equivalent an condition for any given top dimensional cell $T$ and any $ k = 0,1,2,\ldots $. We thus define the \emph{generalized Vandermonde matrix} $ V $, whose $(i,j)$-th element is
$$ \int_{s_i} \omega_j ,$$
being $ \omega_1, \ldots, \omega_{N^k(T)} $ some basis for $X^k (T) $. We thus have the following.

\begin{lemma}
A collection of $ k $-cells $ \{ s_1, \ldots, s_{N^k}(T) \} $ is unisolvent and minimal if and only if $V$ is a square full rank matrix. Such a rank does not depend on the basis $ \{ \omega_1, \ldots, \omega_{\dim X^k(T)} \}$ chosen for $ X^k(T) $.
\end{lemma}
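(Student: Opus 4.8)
The plan is to read the statement off from the preceding Lemma by translating its two conditions into linear algebra over the chosen basis. The essential observation is that the generalized Vandermonde matrix $V$ is nothing but the matrix, in the basis $\{\omega_1,\ldots,\omega_{\dim X^k(T)}\}$, of the linear evaluation map
$$ \Phi\colon X^k(T)\to\R^{N^k(T)},\qquad \Phi(\omega)\doteq\bigl(w(\omega,s_1),\ldots,w(\omega,s_{N^k(T)})\bigr). $$
Indeed, if $\omega=\sum_j c_j\omega_j$, then by linearity of the integral $w(\omega,s_i)=\sum_j c_j\int_{s_i}\omega_j=\sum_j V_{ij}c_j$, so that $\Phi$ acts on coordinate vectors as $c\mapsto Vc$. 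The matrix $V$ is thus of size $N^k(T)\times\dim X^k(T)$, a priori rectangular.

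First I would handle the equivalence. The preceding Lemma characterises minimality and unisolvence as the conjunction of condition \eqref{eq: alternative unisolvence} — the only $\omega\in X^k(T)$ annihilated by all weights on $\mathcal{F}^k(T)$ is $\omega=0$ — together with the count $N^k(T)=\dim X^k(T)$. The first is exactly the injectivity of $\Phi$, that is $\ker V=\{0\}$, which for the matrix $V$ means full column rank $\operatorname{rank}V=\dim X^k(T)$. The second says precisely that $V$ is square. A square matrix with full column rank has full rank and is invertible, and conversely; so the two conditions together are equivalent to the single requirement that $V$ be square and of full rank. This settles both implications at once.

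Finally I would verify the basis independence, which is what makes the claim well posed. If $\{\omega_j'\}$ is another basis, with $\omega_j'=\sum_l P_{lj}\omega_l$ for an invertible change-of-basis matrix $P$, the associated Vandermonde matrix is $V'_{ij}=\int_{s_i}\omega_j'=\sum_l V_{il}P_{lj}$, that is $V'=VP$. Right multiplication by the invertible $P$ preserves rank, so $\operatorname{rank}V'=\operatorname{rank}V$; hence the rank, and in particular the property of being full rank, is intrinsic to the collection of cells and does not depend on the basis chosen for $X^k(T)$.

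There is no genuine obstacle here: the argument is simply a dictionary between the geometric formulation of the previous Lemma and its matrix form. The only point demanding (minimal) care is to keep the two integers $N^k(T)$ and $\dim X^k(T)$ distinct until squareness is invoked, so that the injectivity of $\Phi$ is correctly read as full \emph{column} rank before one is entitled to conclude invertibility of $V$.
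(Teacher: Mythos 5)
Your proof is correct. The paper states this lemma without proof, treating it as a standard fact, and your argument — identifying $V$ with the matrix of the evaluation map $\Phi$, reading unisolvence as injectivity (full column rank), minimality as squareness, and checking basis independence via $V'=VP$ with $P$ invertible — is exactly the linear-algebra dictionary the authors leave implicit, including the right care in distinguishing $N^k(T)$ from $\dim X^k(T)$ before concluding invertibility.
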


\subsection{A motivation: the scalar case}
To fix ideas, let $ T $ be a $2$-simplex, i.e. a non degenerate triangle. Notice that, for $ k = 0 $ and $X^0(T) = \mathcal{P}_r(T)$, the problem of deducing unisolvence and minimality is linked to the problem of deducing if a collection of nodes $ \mathcal{N} $ in $ \R^2 $ is \emph{poised}, which means that the only polynomial vanishing on $ \mathcal{N} $ is the zero polynomial. Explicitly, for a polynomial $ \varphi \in \mathcal{P}_r (\R^2) $ this reads as
$$ \varphi (\pmb{x}) = 0 \quad \forall \pmb{x} \in \mathcal{N} \ \Longrightarrow \varphi (\pmb{x}) = 0 \quad \forall \pmb{x} \in \R^2 .$$
This problem is still unsolved in its greatest generality, however several partial results and conjectures have been offered. A possible approach to a complete understanding of the placement of points in $ \R^2 $ consists in studying the number of lines that pass through a fixed number of points of $ \mathcal{N} $. This does not give \emph{all possible unisolvent sets}, but the conjectural result claims these collections are \emph{all unisolvent}, see \cite{GCGM}. This approach is convenient in this framework, since when considering particular collection of points, such as \emph{principal lattices} or \emph{regular lattices} \cite{ChungYao} and some of their subsets, one may reduce the problem. 

These considerations clearly also extend to greater $ k $, in this context to $ k = 1 $ (that is, to edges) and $ k = 2 $ (that is, to faces). Some numerical results relate these two problems. In particular, for $ k = 1 $ we address the reader to \cite{nostropreprint} and for $ k = 2 $ to \cite{ABRICO}.

\subsection{Interpolators and (co)-homological tools} \label{sect:interpdef}
For each $T$ in $\mathcal{T}$, for each $k = 0,1,2,\ldots, \dim T$, a physical sysdofs induces an interpolator $\Pi^k(T) :\Lambda^k(T) \to X^k(T)$ by the equations: 
\begin{equation}
   w(\omega,s) = w(\Pi^k(T)\omega,s),\qquad\forall s\in\mathcal{F}^k(T). 
   \label{eq:interpolator}
\end{equation}
The interpolator is well defined if the physical sysdofs is unisolvent. In fact, assume that $\Pi^k\omega$ and $\tilde{\Pi}^k\omega$ are two interpolators which satisfy \eqref{eq:interpolator}. Then 
\[ w (\Pi^k\omega - \tilde{\Pi}^k\omega)  = 0, \]
and unisolvence gives $\omega = 0$. 

We are interested in interpolators that commute with the exterior derivative, that is, such that the following diagram is commutative
\begin{equation*}
    \begin{tikzcd}
\Lambda^k(T) \arrow[r, "\der"] \arrow[d, "\Pi^k(T)"] & \Lambda^{k+1}(T) \arrow[d, "\Pi^{k+1}(T)"] \\
X^k(T) \arrow[r, "\der"]                             & X^{k+1}(T)                                
\end{tikzcd}
\end{equation*}
In \cite{ZAR} Zampa et al. showed that an interpolator induced by a physical sysdofs commutes with the exterior derivative if and only if the union 
\begin{equation}
    \mathcal{F}^{\bullet}(T)\doteq \bigcup_{k = 0}^{\dim T}\mathcal{F}^k(T) = \bigcup_{k = 0}^{\dim T}\bigcup_{S\leq T}\mathring{\mathcal{F}}^k(S)
\end{equation}
is a cellular complex, that is, if and only if the boundary of a cell in $\mathcal{F}^{k+1}(T)$ is a union of cells in $\mathcal{F}^k(T)$. 

If this is the case, we can consider $k$-chains $C_k(\mathcal{F}^{\bullet}(T))$ and $k$-cochains $C^k(\mathcal{F}^{\bullet}(T))$ over $\R$. Denote with $\delta$ the coboundary operator mapping $k$-cochains to $k+1$-cochains \cite{Hatcher}. It is natural then to consider the de Rham map \cite{Whitney}
\begin{equation}
\begin{split}
    \mathfrak{R}^k: X^k(T) & \to C^k(\mathcal{F}^{\bullet}(T)) \\
    \omega & \mapsto \left( c\mapsto \int_c\omega \right).
    \end{split}
    \label{eq: de Rham map}
\end{equation}
Stokes Theorem \cite{Lee} implies that the de Rham map commutes with the exterior derivative, that is, is a chain map. We can then arrange everything in a commutative diagram
\begin{equation}
\small
   \begin{tikzcd}
0 \arrow[r] & \R \arrow[r, hook] \arrow[d, "\Id"] & X^0(T) \arrow[r, "\der"] \arrow[d, "\mathfrak{R}^0"]  & X^1(T) \arrow[r, "\der"] \arrow[d, "\mathfrak{R}^1"] & \dots \arrow[r, "\der"]      & X^{\dim T}(T) \arrow[r] \arrow[d, "\mathcal{R}^{\dim T}"] & 0 \\
0 \arrow[r] & \R \arrow[r, "\psi"]                & C^0(\mathcal{F}^{\bullet}(T))) \arrow[r, "\delta"] & C^1(\mathcal{F}^{\bullet}(T)) \arrow[r, "\delta"] & \dots \arrow[r, "\delta"] & C^{\dim T}(\mathcal{F}^{\bullet}(T)) \arrow[r]           & 0
\end{tikzcd}
\label{eq: diagram}
\end{equation}
where $\psi$ is the unique map that makes it commutative, sending $1$ to the $0$-cochain $c\mapsto 1$. Notice that the top sequence is exact since $X$ is a compatible finite element system. 
We can thus give an equivalent characterization of unisolvence and minimality in terms of the de Rham map. 
\begin{lemma}
A physical sysdofs $\mathcal{F}$ is unisolvent (unisolvent and minimal) if and only if, for each $T$ in $\mathcal{T}$ and for each $ k $ the de Rham map \eqref{eq: de Rham map} is injective (an isomorphism of vector spaces). 
\end{lemma}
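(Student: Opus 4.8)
The plan is to reduce both statements to the characterization of unisolvence in terms of the weights on $\mathcal{F}^k(T)$ that was already established, supplementing it with a dimension count for the minimality half. The first step I would take is to identify the kernel of the de Rham map. Fix a cell $T$ and an index $k$. By construction the $k$-cells of $\mathcal{F}^k(T)$ form a basis of the free $\R$-vector space $C_k(\mathcal{F}^{\bullet}(T))$ of $k$-chains, so a $k$-cochain is the zero functional precisely when it annihilates every $s\in\mathcal{F}^k(T)$. Applied to $\mathfrak{R}^k\omega$ this gives
\begin{equation*}
\ker\mathfrak{R}^k = \Big\{\omega\in X^k(T): \int_s\omega = 0 \ \text{for all } s\in\mathcal{F}^k(T)\Big\} = \big\{\omega\in X^k(T): w(\omega,s)=0 \ \forall s\in\mathcal{F}^k(T)\big\},
\end{equation*}
so that $\mathfrak{R}^k$ is injective exactly when the only form of $X^k(T)$ killed by all weights on $\mathcal{F}^k(T)$ is the zero form.

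Second, I would match this condition to unisolvence. One implication is immediate from the alternative characterization of unisolvence proved in the earlier Lemma: if $\mathcal{F}$ is unisolvent, then the displayed kernel is trivial and each $\mathfrak{R}^k$ is injective. For the converse I would show that triviality of the kernels forces unisolvence directly. Given $S\in\mathcal{T}$ and $\omega\in\mathring{X}^k(S)$ with $w(\omega,s)=0$ for every $s\in\mathring{\mathcal{F}}^k(S)$, the vanishing trace of $\omega$ on $\partial S$ makes $\int_s\omega = 0$ for every $k$-cell $s$ lying in a proper subcell $S'<S$; combined with the hypothesis on $\mathring{\mathcal{F}}^k(S)$ this yields $w(\omega,s)=0$ for all $s\in\mathcal{F}^k(S)$, and injectivity of $\mathfrak{R}^k$ for the cell $S$ then gives $\omega=0$. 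Since injectivity is assumed for every cell of $\mathcal{T}$ and every $k$, this is precisely unisolvence.

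Third, for the isomorphism statement I would add a dimension count. Both $X^k(T)$ and $C^k(\mathcal{F}^{\bullet}(T))$ are finite dimensional, with $\dim C^k(\mathcal{F}^{\bullet}(T)) = N^k(T)$, the number of $k$-cells of $\mathcal{F}^{\bullet}(T)$. A linear map between finite dimensional spaces is an isomorphism if and only if it is injective and the two dimensions coincide. By the previous step injectivity is equivalent to unisolvence, while the equality $\dim X^k(T) = N^k(T)$ is the minimality condition in the form supplied by the stronger ``if and only if'' of the alternative-characterization Lemma; combining the two shows that all the $\mathfrak{R}^k$ are isomorphisms exactly when $\mathcal{F}$ is unisolvent and minimal.

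The kernel identification is routine, and the dimension count is purely formal. The point that requires care is the bookkeeping between the full spaces $X^k$ and the boundary-free spaces $\mathring{X}^k$, and between top dimensional and general cells; the only genuinely non-formal step is the converse in the second paragraph, where the trace argument is needed to promote the weights on $\mathring{\mathcal{F}}^k(S)$ to the weights on all of $\mathcal{F}^k(S)$ so that injectivity of the de Rham map can be invoked.
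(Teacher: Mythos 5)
Your proposal is correct: the kernel identification, the trace argument promoting vanishing weights on $\mathring{\mathcal{F}}^k(S)$ to vanishing weights on all of $\mathcal{F}^k(S)$, and the dimension count via $\dim C^k(\mathcal{F}^{\bullet}(T)) = N^k(T)$ are exactly what is needed. The paper states this lemma without any proof, treating it as an immediate consequence of the preceding lemma and the definition of the de Rham map, so your argument simply fills in the details the authors left implicit (including the only mildly delicate point, the passage between general cells and top-dimensional ones).
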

In \cite{ZAR}, the authors showed that a unisolvent and minimal physical system of degrees of freedom that induces commuting interpolators  must satisfy the following condition: the union of all cells in $\mathcal{F}^\bullet(T)$ paves $T$. If this this is the case, the bottom sequence in \eqref{eq: diagram} is exact.

\section{Physical degrees of freedom for the second family} \label{sect:mainsect}
In this section we will construct a physical sysdofs for the finite element system $X^k(T) = \mathcal{P}_{r-k}\Lambda^k(T)$ with $r\geq 2$ in the two-dimensional case. Since the majority of the following results are general, we claim and prove them in the case of an $n$-simplex $ T $; the specific case of interest here is immediately obtained for $ n = 2 $. We will exploit features of $ \mathbb{R}^2 $ only for the definition of $ \mathcal{F} $ and hence in Theorem \ref{thm:main}. We invite the reader to match the following construction with that for N\'ed\'elec first family \cite{NedelecFirst} given in \cite{ENUMATH}. Recall that spaces $ \mathcal{P}_{r-k}\Lambda^k(T) $ are defined as subspaces of differential $k$-forms $ \Lambda^k (T) $ whose coefficient are polynomials of degree $ \leq r-k $. These spaces are sometimes called \emph{complete}, since they are precisely tensor products $ \mathcal{P}_{r-k} (T) \otimes \mathrm{Alt}^k(T) $, being $ \mathcal{P}_{r-k} (T) $ the space of polynomials of degree at most $ r-k $ in $n$ variables defined on $ T $ and $ \mathrm{Alt}^k(T) $ that of linear alternating $k$-forms on (the tangent bundle of) $ T $. This makes it easy the computation of 
$$ \dim \mathcal{P}_r \Lambda^k (T) = \dim \mathcal{P}_r (T) \cdot \dim \mathrm{Alt}^k(T) = \binom{r + \dim T}{\dim T} \binom{\dim T}{k}.$$
When $ n = 2 $, proxies of this sequence are known as N\'ed\'elec second family \cite{NedelecSecond} and the central space is that of \cite{BDM}. Note that we use the subscript $r-k$ instead of the classical $r$ found in the literature, since the exterior derivative lowers the polynomial degree at each stage of the complex
\[ \mathcal{P}_r\Lambda^0(T) \overset{\der}{\rightarrow} \mathcal{P}_{r-1}\Lambda^1(T) \overset{\der}{\rightarrow} \dots\overset{\der}{\rightarrow} \mathcal{P}_{r-\dim T}\Lambda^{\dim T}(T). \]
We recall now the definition of \emph{small simplex} from \cite{ChristiansenRapetti}. For $ n = \dim T $ and $r \geq 0 $ let $\mathcal{I}(r,n)$ be the set of multi-indices $\pmb{\alpha} = (\alpha_0,\ldots,\alpha_n)$ with nonnegative components and such that $|\boldsymbol{\alpha} | \doteq \alpha_0 + \ldots + \alpha_n = r$. If $T$ is a simplex of dimension $n$ and vertices $\{\pmb{x}_0,\ldots,\pmb{x}_n\}$, we equip it with barycentric coordinates $\{\lambda_0,\ldots,\lambda_n \}$, i.e. the only (up to permutations) non negative degree $1$ polynomials defined on $ T $ such that 
$$ \boldsymbol{x} = \sum_{i=0}^n \lambda_i \boldsymbol{x}_i , \qquad \sum_{i=0}^n \lambda_i = 1 , \qquad \forall \boldsymbol{x} \in T.$$
For each $\pmb{\alpha}\in\mathcal{I}(r-1,n)$ we define the \emph{small} $n$-simplex $s^{\pmb{\alpha}}$ as the image of $T$ under the homothety 
\begin{equation}
z_{\pmb{\alpha}} \ : \quad \pmb{x} \ \mapsto \ z_{\pmb{\alpha}}(\pmb{x}) = \frac{1}{r} \sum_{i=0}^n [\lambda_i(\pmb{x}) + \alpha_i]\, \pmb{x}_i\,.
\label{eq: homothety}
\end{equation}
Note that \eqref{eq: homothety} is just the identity for $r = 1$. Small $k$-simplices are just $k$-subsimplices of small $n$-simplices and we denote them with $\Sigma^k_r(T)$. In particular $\Sigma^0_r(T)$ is the principal lattice $L_r(T)$, that is, the set of points with barycentric coordinates 
\[ \Sigma_r^0 (T) \doteq \frac{1}{r}(\lambda_0 + \alpha_0, \ldots, \lambda_n + \alpha_n ),\qquad \pmb{\alpha}\in\mathcal{I}(r,n). \]
 If the reader is familiar with weights for N\'ed\'elec first family they might have noted that a slightly different definition of small simplices is usually provided. In particular, the term $ \lambda_i (\boldsymbol{x}) $ in \eqref{eq: homothety} is usually omitted, so that overlappings are avoided. We shall see the reason of such a different choice in the subsequent of this section.
 
For $\pmb{\xi}\in T$, define the affine tranformation 
\begin{equation}
    \tau_{\pmb{\xi}} \ : \quad\pmb{x}\mapsto \lambda_0(\pmb{\xi})\pmb{x} + \sum_{i=1}^n\lambda_i(\pmb{\xi})\pmb{x}_i. \label{eq: tau map}
\end{equation}
Note that the map \eqref{eq: tau map} is invertible if and only if $\lambda_0(\pmb{\xi}) \neq 0$. We define $ T_{\boldsymbol{\xi}} \doteq \tau_{\boldsymbol{\xi}} (T) $ and let $ \tau^*_{\boldsymbol{\xi}} $ denote the pullback with respect to $ \tau_{\boldsymbol{\xi}} $. We have the following.
\begin{lemma} \label{lem:useful}
Let $ \omega \in \mathcal{P}_{r-\dim T} \Lambda^{\dim T} (T)$ be such that 
$$ \int_{T} \tau_{\boldsymbol{\xi}} \omega = 0, \qquad \forall \boldsymbol{\xi} \in \mathbb{R}^{\dim T}.$$
Then $ \omega = 0 $.
\end{lemma}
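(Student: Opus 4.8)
The plan is to reduce everything to a scalar statement about the polynomial coefficient of $\omega$ and then to extract that coefficient through an algebraic identity. Write $n = \dim T$ and $\omega = p\,\der x_1\wedge\cdots\wedge\der x_n$ with $p\in\mathcal{P}_{r-n}(T)$. Since $\tau_{\boldsymbol{\xi}}$ is affine with linear part $\lambda_0(\boldsymbol{\xi})\,\Id$, its Jacobian determinant is $\lambda_0(\boldsymbol{\xi})^n$, so that $\tau_{\boldsymbol{\xi}}^*\omega = \lambda_0(\boldsymbol{\xi})^n\,(p\circ\tau_{\boldsymbol{\xi}})\,\der x_1\wedge\cdots\wedge\der x_n$. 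The hypothesis then reads $\lambda_0(\boldsymbol{\xi})^n F(\boldsymbol{\xi}) = 0$ for all $\boldsymbol{\xi}$, where $F(\boldsymbol{\xi})\doteq\int_T p\circ\tau_{\boldsymbol{\xi}}$. As $F$ is a polynomial in $\boldsymbol{\xi}$ (the integral over $T$ of a polynomial in $(\boldsymbol{\xi},\boldsymbol{x})$) and $\lambda_0$ is a nonzero polynomial, I first conclude that $F\equiv 0$ on all of $\R^n$.

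Next I would turn $F\equiv 0$ into a single polynomial identity. Using $\tau_{\boldsymbol{\xi}}(\boldsymbol{x}) = \boldsymbol{\xi} + \lambda_0(\boldsymbol{\xi})(\boldsymbol{x}-\boldsymbol{x}_0)$ together with the (finite, exact) Taylor expansion of $p$ about $\boldsymbol{\xi}$, one obtains
\[ F(\boldsymbol{\xi}) = \sum_{j\ge 0}\lambda_0(\boldsymbol{\xi})^j\,Q_j(\boldsymbol{\xi}), \qquad Q_j\doteq\sum_{|\boldsymbol{\beta}|=j}c_{\boldsymbol{\beta}}\,D^{\boldsymbol{\beta}}p, \quad c_{\boldsymbol{\beta}}\doteq\frac{1}{\boldsymbol{\beta}!}\int_T(\boldsymbol{x}-\boldsymbol{x}_0)^{\boldsymbol{\beta}}. \]
Thus $\sum_j\lambda_0^j Q_j\equiv 0$ as polynomials in $\boldsymbol{\xi}$, with $Q_0 = \mathrm{vol}(T)\,p$.

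The core step is then to prove, by induction on $k$, that $\lambda_0^k$ divides $p$ in the polynomial ring; since $p$ has finite degree, this forces $p=0$ and hence $\omega=0$. Assuming $p=\lambda_0^k p_k$, a short computation shows every summand of the identity is divisible by $\lambda_0^k$; dividing it out (legitimate because the ring is an integral domain and $\lambda_0\neq 0$) and restricting to the hyperplane $\{\lambda_0=0\}$ annihilates all but finitely many terms. A Leibniz expansion, using that $\lambda_0$ is affine with $\partial_i\lambda_0=a_i$ and $\boldsymbol{a}\cdot(\boldsymbol{x}-\boldsymbol{x}_0)=\lambda_0(\boldsymbol{x})-1$, collapses the survivors to $C_k\,(p_k|_{\{\lambda_0=0\}})=0$ with
\[ C_k = \int_T\lambda_0(\boldsymbol{x})^k\,\der\boldsymbol{x}. \]
Since $C_k$ is a moment of a barycentric coordinate it is strictly positive, whence $p_k$ vanishes on $\{\lambda_0=0\}$ and therefore $\lambda_0\mid p_k$, closing the induction.

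The main obstacle — and the reason the statement is not immediate — is the coupling between the homothety ratio $\lambda_0(\boldsymbol{\xi})$ and the base point $\boldsymbol{\xi}$: one would like to shrink $\tau_{\boldsymbol{\xi}}(T)$ onto an arbitrary point and read off $p$ pointwise, but $\lambda_0(\boldsymbol{\xi})$ can be made small only as $\boldsymbol{\xi}$ approaches $\{\lambda_0=0\}$, so a naive limit recovers the vanishing of $p$ on that single hyperplane alone. The algebraic identity above is exactly what decouples the two, and the engine that makes the induction run is the non-degeneracy of the moments $\int_T\lambda_0^k$.
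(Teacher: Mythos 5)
Your proof is correct, but it takes a genuinely different route from the paper's: the paper disposes of this lemma in a single line by invoking \cite[Lemma 3.12]{ChristiansenRapetti}, the injectivity result underlying weights on small simplices, whereas you give a self-contained algebraic argument. I checked your mechanism step by step and it holds up: the Jacobian factor $\lambda_0(\pmb{\xi})^n$ is correctly stripped off by cancellation in the integral domain $\R[\pmb{\xi}]$; the exact Taylor expansion of $p\circ\tau_{\pmb{\xi}}$ about $\pmb{\xi}$ with increment $\lambda_0(\pmb{\xi})(\pmb{x}-\pmb{x}_0)$ does yield the identity $\sum_j\lambda_0^jQ_j\equiv 0$ with $Q_0=\mathrm{vol}(T)\,p$; in the inductive step the Leibniz expansion of $D^{\pmb{\beta}}(\lambda_0^kp_k)$ leaves, after dividing by $\lambda_0^k$ and restricting to $\{\lambda_0=0\}$, only the $\pmb{\gamma}=\pmb{\beta}$ terms with $j\le k$; and the multinomial--binomial collapse $\sum_{j\le k}\binom{k}{j}\int_T(\lambda_0-1)^j=\int_T\lambda_0^k>0$ is exactly the non-degeneracy that closes the induction, which terminates because $\deg p\le r-\dim T$ is finite. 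What your route buys is transparency and independence from the external reference: one sees precisely \emph{why} the statement is true (positivity of the moments $\int_T\lambda_0^k$), and the argument needs nothing beyond elementary polynomial algebra. What the paper's route buys is brevity and a direct link to the established small-simplex machinery, of which this lemma is a special case. If you polish the write-up, the only steps worth an explicit sentence are that a polynomial vanishing on the affine hyperplane $\{\lambda_0=0\}$ is divisible by the degree-one form $\lambda_0$, and that each summand of the identity is individually divisible by $\lambda_0^k$ before you divide it out; both are routine.
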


\begin{proof}
This is a direct consequence of \cite[Lemma $3.12$]{ChristiansenRapetti}.
\end{proof}

\begin{theorem}
Let $\Gamma = \{\pmb{\xi}_1,\ldots,\pmb{\xi}_{N^2(T)}\}$ be a poised subset of $L_{r}(T)$ such that $\lambda_0(\pmb{\xi}_i) > 0$ for $i=1,\ldots,N^2(T)$. Let $ \omega \in \mathcal{P}_{r-\dim T} \Lambda^{\dim T} (T)$ be such that  \[ \int_{\tau_{\pmb{\xi}}(T)}\omega= 0,\qquad \forall \pmb{\xi}\in \Gamma. \]
Then $\omega = 0$. 
\end{theorem}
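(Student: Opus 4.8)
The plan is to reduce the statement to Lemma~\ref{lem:useful} by upgrading the finitely many vanishing conditions on $\Gamma$ to the vanishing of $\int_T\tau_{\pmb{\xi}}^*\omega$ for \emph{every} $\pmb{\xi}$. The key observation is that the map $\pmb{\xi}\mapsto\int_{\tau_{\pmb{\xi}}(T)}\omega$ is a polynomial in $\pmb{\xi}$, so that its vanishing on a poised set propagates to its vanishing everywhere.

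Set $n\doteq\dim T$ and write $\omega = f\,\der x_1\wedge\cdots\wedge\der x_n$ with $f\in\mathcal{P}_{r-n}(T)$. For $\pmb{\xi}$ with $\lambda_0(\pmb{\xi})>0$ the map $\tau_{\pmb{\xi}}$ is an orientation-preserving diffeomorphism, so the change of variables formula gives $\int_{\tau_{\pmb{\xi}}(T)}\omega=\int_T\tau_{\pmb{\xi}}^*\omega$. Since by \eqref{eq: tau map} the linear part of $\tau_{\pmb{\xi}}$ is the scalar $\lambda_0(\pmb{\xi})$ times the identity, its Jacobian determinant is $\lambda_0(\pmb{\xi})^n$, and therefore
\[ \int_{\tau_{\pmb{\xi}}(T)}\omega = \lambda_0(\pmb{\xi})^n\int_T f\bigl(\tau_{\pmb{\xi}}(\pmb{x})\bigr)\,\der\pmb{x}\doteq\lambda_0(\pmb{\xi})^n\,Q(\pmb{\xi}). \]

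Next I would carry out a degree count. By \eqref{eq: tau map} the components of $\tau_{\pmb{\xi}}(\pmb{x})$ are affine in $\pmb{x}$ with coefficients that are degree-one polynomials in $\pmb{\xi}$, through the barycentric coordinates $\lambda_i(\pmb{\xi})$; hence $f(\tau_{\pmb{\xi}}(\pmb{x}))$ has degree at most $r-n$ in $\pmb{\xi}$, and integrating in $\pmb{x}$ over $T$ preserves this, so $Q\in\mathcal{P}_{r-n}(\R^n)$. Because $\lambda_0(\pmb{\xi}_i)>0$ for every $\pmb{\xi}_i\in\Gamma$, the hypothesis $\int_{\tau_{\pmb{\xi}_i}(T)}\omega=0$ is equivalent to $Q(\pmb{\xi}_i)=0$. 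Now $\dim\mathcal{P}_{r-n}(\R^n)=\binom{r}{n}=N^{\dim T}(T)$ matches the cardinality of $\Gamma$, and $\Gamma$ is poised for precisely this space; hence $Q\equiv 0$. Consequently $\lambda_0(\pmb{\xi})^nQ(\pmb{\xi})\equiv 0$ as a polynomial in $\pmb{\xi}$, so that $\int_T\tau_{\pmb{\xi}}^*\omega=0$ for all $\pmb{\xi}$ (the identity, established on the open set $\{\lambda_0>0\}$, extends to all of $\R^{\dim T}$ by polynomiality, the pullback integral being defined there irrespective of invertibility of $\tau_{\pmb{\xi}}$). Lemma~\ref{lem:useful} then gives $\omega=0$.

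I expect the main obstacle to be the clean bookkeeping behind the factorization $\int_{\tau_{\pmb{\xi}}(T)}\omega=\lambda_0(\pmb{\xi})^nQ(\pmb{\xi})$ with $Q$ of the correct degree, and in matching $|\Gamma|$ with $\dim\mathcal{P}_{r-n}$ so that poisedness is exactly the right hypothesis. The condition $\lambda_0(\pmb{\xi}_i)>0$ is what makes the argument run: it is precisely what allows one to cancel the Jacobian factor $\lambda_0^n$ and pass from the vanishing of the weight $\int_{\tau_{\pmb{\xi}_i}(T)}\omega$ to the vanishing of $Q$ at $\pmb{\xi}_i$, points on the face $\{\lambda_0=0\}$ carrying no information.
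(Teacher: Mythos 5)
Your proposal is correct and follows the same route as the paper: both regard $\pmb{\xi}\mapsto\int_{\tau_{\pmb{\xi}}(T)}\omega$ as a polynomial in $\pmb{\xi}$, use poisedness of $\Gamma$ to force that polynomial to vanish identically, and then invoke Lemma \ref{lem:useful}. In fact your version is sharper where the paper is loose: the paper's proof asserts that the map is a polynomial of degree $r$ vanishing on ``$\lvert L_r(T)\rvert$ points of a poised set,'' even though $\Gamma$ contains only $N^2(T)=\binom{r}{\dim T}<\lvert L_r(T)\rvert$ points, which would not determine a general degree-$r$ polynomial; your factorization $\int_{\tau_{\pmb{\xi}}(T)}\omega=\lambda_0(\pmb{\xi})^{n}Q(\pmb{\xi})$ with $\deg Q\le r-n$ supplies exactly the missing bookkeeping, matching $\lvert\Gamma\rvert$ with $\dim\mathcal{P}_{r-n}$ and making transparent why the hypothesis $\lambda_0(\pmb{\xi}_i)>0$ is needed to cancel the Jacobian factor.
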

\begin{proof}
The map 
\[ \pmb{\xi}_i \mapsto\int_{\tau_{\pmb{\xi}_i}(T)}\omega \]
is a polynomial of degree $r$ in $\dim T$ variables $ \xi_1, \ldots, \xi_{\dim T} $ which vanishes on $\lvert L_r(T)\rvert$ points of a poised set, therefore is zero for each $ \boldsymbol{\xi} \in \mathbb{R}^{\dim (T)}$. It follows from Lemma \ref{lem:useful} that $\omega  = 0$. 
\end{proof}

As an example of set $ \Gamma $, we may pick any set satisfying the GC condition \cite{GCCarni} (see also \cite{GCGeneral} and \cite{deBoor} for higher dimensional counterparts). Some explicit examples can be found in \cite{GCr4} and \cite{GCr5} and we offer more in a recursive fashion in the following.

We define $\mathcal{F}$ as follows. Let $T$ be a $2$-simplex. For $k = 0$, $\mathcal{F}^0(T)$ is just the principal lattice $L_r(T)$. 
For $k = 2$ we consider the GC set $\Gamma_r = \{\pmb{\xi}_1,\ldots,\pmb{\xi}_{N^2(T)}\}$, which is a subset of $L_r(T)$ of cardinality $N^2(T) = \dim\mathcal{P}_{r-2}\Lambda^2(T) = \frac{r(r-1)}{2}$. For $i = 1,\ldots, N^2(T)$, define the subset 
\[ \Gamma_r(i) \doteq \{ \pmb{\xi} \in\ \Gamma \mid \lambda_0(\pmb{\xi}) < \lambda_0 (\pmb{\xi}_i) \} \]
We define $\mathcal{F}^2(T)$ as the set $\{s_1,\ldots,s_{N^2(T)}\}$ where 
\begin{equation}
    s_i = \overline{ \tau_{\pmb{\xi}_i}(T) \setminus \left(\bigcup_{\pmb{\xi}\in\Gamma_r(i)}\tau_{\pmb{\xi}}(T) \right) }.
    \label{eq : def case k = 2}
\end{equation}
The closure is needed to preserve the structure of cell. Finally, we define $\mathcal{F}^1(T)$ as the subset of $\Sigma_r^1(T)$ made of those small $1$-simplices that are on the boundary of cells in $\mathcal{F}^2(T)$.

We now propose a possible choice of $\Gamma_r$ for each polynomial degree $r$. We identify each point $x$ of $T$ with the triple $(\lambda_0 (x) , \lambda_1 (x) , \lambda_2( x))$ (e.g. the barycenter is $(1/3,1/3,1/3)$). Let 
\begin{equation}
    \Gamma_r = \begin{cases} \{ (1, 0, 0 ) \}  \text{ if $r = 2$, }\\ 
    \tau_{\boldsymbol{\zeta}_r}(\Gamma_{r-1}) \cup \Delta_r \text{ if $r > 2$ }, \end{cases}
\end{equation}
where 
\begin{align*}
    \boldsymbol{\zeta}_r & = \begin{cases} \left ( \frac{r -1}{r}, 0, \frac{1}{r} \right ) \text{ if $r$ is odd,}\\
    \left ( \frac{r -1}{r}, \frac{1}{r}, 0 \right ) \text{ if $r$ is even, } \end{cases} \\ 
    \Delta_r & = \begin{cases} \left\{\frac{1}{r} ( i, 1 - i, 0 ) \text{ for } i = 1,\ldots,r, \ i\neq \frac{r+1}{2} \right\}, \text{ if $r$ is odd,}\\ 
    \left\{\frac{1}{r} ( i, 0 , 1- i) \text{ for } i = 1,\ldots,r, \ i\neq \frac{r}{2} \right\},\text{ if $r$ is even. }\end{cases}
\end{align*}
For example $\Gamma_3$ is 
\begin{align*}
    \Gamma_3 &= \left \{ \left ( \frac{2}{3}, 0, \frac{1}{3} \right ) , \left ( \frac{1}{3}, \frac{2}{3}, 0 \right ) , ( 1, 0, 0 ) \right \},
\end{align*}
since $\tau_{( 2/3 , 0 , 1/3 )}$ maps $( 1, 0, 0)$ to $ ( 2/3 , 0 , 1/3 )$ and $\Delta_3 = \{ (1/3, 2/3, 0 ) , (1, 0, 0) \}$. Similarly, $\Gamma_4$ is given by 
\begin{equation*}
    \Gamma_4 = \left \{ \left ( \frac{1}{2}, \frac{1}{4}, \frac{1}{4} \right ), \left ( \frac{1}{4}, \frac{3}{4}, 0 \right ), \left ( \frac{3}{4}, \frac{1}{4}, 0 \right ), \left ( \frac{1}{4}, 0, \frac{3}{4} \right ), \left ( \frac{3}{4}, 0, \frac{1}{4}\right ), (1, 0, 0) \right \}.
\end{equation*}
See Figure \ref{fig:cells} for a depiction of the set $\Gamma_r$ and the resulting cells $\mathcal{F}$ for $r = 2$, $3$ and $4$.

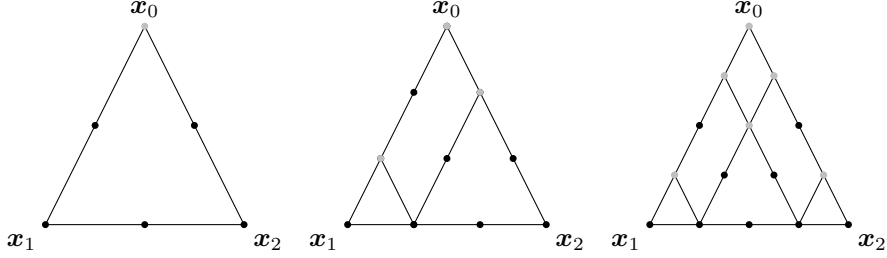
\begin{figure}
    \centering
	\begin{tikzpicture}[scale = .66]
	\draw (0,0) -- (4,0) -- (2,4) -- cycle;
	\fill (0,0) circle (2pt);
	\fill (4,0) circle (2pt);
	\fill (2,0) circle (2pt);
	\fill (1,2) circle (2pt);
	\fill (3,2) circle (2pt);
	
    \draw (0,0) node [anchor = north east] {$ \boldsymbol{x}_1 $};
	\draw (2,4) node [anchor = south] {$ \boldsymbol{x}_0 $};
	\draw (4,0) node [anchor = north west] {$ \boldsymbol{x}_2 $};
	
	\fill[lightgray] (2,4) circle (2pt);
\end{tikzpicture}
	\begin{tikzpicture}[scale = 0.44]
	\draw (0,0) -- (6,0) -- (3,6) -- cycle;
	\fill (0,0) circle (3pt);
	\fill (2,0) circle (3pt);
	\fill (4,0) circle (3pt);
	\fill (6,0) circle (3pt);
	\fill (2,4) circle (3pt);
	\fill (2,0) circle (3pt);
	\fill (1,2) circle (3pt);
	\fill (3,2) circle (3pt);
	\fill (3,6) circle (3pt);
	\fill (4,4) circle (3pt);
	\fill (5,2) circle (3pt);
	
	\draw (1,2) -- (2,0) -- (4,4);
	\draw (0,0) node [anchor = north east] {$ \boldsymbol{x}_1 $};
	\draw (3,6) node [anchor = south] {$ \boldsymbol{x}_0 $};
	\draw (6,0) node [anchor = north west] {$ \boldsymbol{x}_2 $};
	
	\fill[lightgray] (3,6) circle (3pt);
	\fill[lightgray] (4,4) circle (3pt);
	\fill[lightgray] (1,2) circle (3pt);
\end{tikzpicture}
	\begin{tikzpicture}[scale = .66]
	\draw (0,0) -- (4,0) -- (2,4) -- cycle;
	\fill (0,0) circle (2pt);
	\fill (4,0) circle (2pt);
	\fill[lightgray] (2,4) circle (2pt);
	\fill (2,0) circle (2pt);
	\fill (1,2) circle (2pt);
	\fill (3,2) circle (2pt);
	\fill (1,0) circle (2pt);
	\fill (3,0) circle (2pt);
	\fill[lightgray] (0.5, 1) circle (2pt);
	\fill (1.5,1) circle (2pt);
	\fill[lightgray] (1.5,3) circle (2pt);
    \fill[lightgray] (3.5,1) circle (2pt);
	\fill[lightgray] (2.5,3) circle (2pt);
	\fill (2.5,1) circle (2pt);
	\fill[lightgray] (2,2) circle (2pt);
	
	\draw (0.5, 1) -- (1,0) -- (2.5, 3);
	\draw (1.5, 3) -- (3,0) -- (3.5, 1);
	
	\draw (0,0) node [anchor = north east] {$ \boldsymbol{x}_1 $};
	\draw (2,4) node [anchor = south] {$ \boldsymbol{x}_0 $};
	\draw (4,0) node [anchor = north west] {$ \boldsymbol{x}_2 $};
	
	\fill[lightgray] (2,4) circle (2pt);
	\fill[lightgray] (0.5, 1) circle (2pt);
	\fill[lightgray] (1.5,3) circle (2pt);
    \fill[lightgray] (3.5,1) circle (2pt);
	\fill[lightgray] (2.5,3) circle (2pt);
	\fill[lightgray] (2,2) circle (2pt);
\end{tikzpicture}
\caption{Cells of $ \mathcal{F} $ for $ r = 2 $, $ r = 3 $ and $ r = 4 $, left to right. Gray dots represent the set $\Gamma_r$, that is, vertices of the triangles considered as (small) $ 2 $-simplices.} \label{fig:cells}
\end{figure}

\begin{remark}\label{rmk:hierar}
The recursiveness in the definition of $\Gamma_r$ gives a hierarchy on the weights associated with these cells. In fact, as degree $ r $ is increased by one, the associated family $ \mathcal{F} $ is obtained by adding a stripe on one side of the triangle, as shown in Figure \ref{fig:hierarchy}. 

\begin{figure}[h]
\centering
	\begin{tikzpicture}[scale = .75]
	\draw (0,0) -- (3,0) -- (1.5,3) -- cycle;
	\fill (0,0) circle (2pt);
	\fill (4,0) circle (2pt);
	\fill (2,4) circle (2pt);
	\fill (2,0) circle (2pt);
	\fill (1,2) circle (2pt);
	\fill (3,2) circle (2pt);
	\fill (1,0) circle (2pt);
	\fill (3,0) circle (2pt);
	\fill (0.5, 1) circle (2pt);
	\fill (1.5,1) circle (2pt);
	\fill (1.5,3) circle (2pt);
    \fill (3.5,1) circle (2pt);
	\fill (2.5,3) circle (2pt);
	\fill (2.5,1) circle (2pt);
	\fill (2,2) circle (2pt);
	
	\draw (0.5, 1) -- (1,0);
	\draw (1,0) -- (2,2);
	% dotted part 
	\draw[dotted] (3,0) -- (3.5, 1);
	\draw[dotted] (3,0) -- (4,0) -- (2,4) -- (1.5,3);
	\draw[dotted] (3,0) -- (3.5,1);
    \draw[dotted] (2,2) -- (2.5,3);
    % dashed part 
    \draw[dashed] (0,0) -- (-1,0) -- (1.5, 5) -- (2,4);
    \draw[dashed] (0.5,1) -- (0,2);
    \draw[dashed] (1.5,3) -- (1,4);
    \draw[dashed] (0,0) -- (-0.5,1);
	
	\fill (-1, 0) circle (2pt);
	\fill (-0.5,1) circle (2pt);
	\fill (0, 2) circle (2pt);
	\fill (0.5, 3) circle (2pt);
	\fill (1, 4) circle (2pt);
	\fill (1.5,5) circle (2pt);
	
	\draw (-1,0) node [anchor = north east] {$ \boldsymbol{x}_1 $};
	\draw (1.5,5) node [anchor = south] {$ \boldsymbol{x}_0 $};
	\draw (4,0) node [anchor = north west] {$ \boldsymbol{x}_2 $};
	
\end{tikzpicture}
\caption{Cells of $ \mathcal{F} $ for $ r = 5 $. Step from $ r = 3$ to $ r=4 $ are obtained by adding the dotted part, step from $ r =4$ to $ r = 5$ is obtained by adding the dashed part.} \label{fig:hierarchy}
\end{figure}
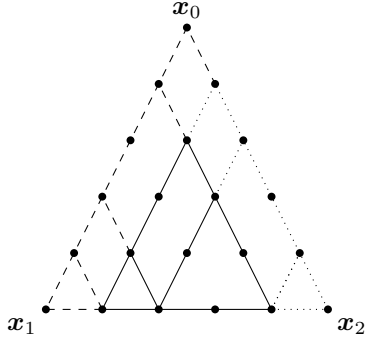

\end{remark}
Before proving unisolvence, we check that $\Gamma_r$ has the right cardinality. This is immediate from Remark \ref{rmk:hierar}.
\begin{lemma}
The set $\Gamma_r$ has cardinality $\lvert \Gamma_r \rvert$ equal to the dimension of $\mathcal{P}_{r-2}\Lambda^2(T)$, that is 
\[ \lvert \Gamma_r \rvert = \frac{r(r-1)}{2} .\]
\end{lemma}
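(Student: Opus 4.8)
The plan is to prove the identity by induction on $r$, exploiting the recursive definition of $\Gamma_r$. The base case $r = 2$ is immediate, since $\Gamma_2 = \{(1,0,0)\}$ has a single element and $\frac{2\cdot 1}{2} = 1$. For the inductive step I would assume $\lvert \Gamma_{r-1}\rvert = \frac{(r-1)(r-2)}{2}$ and show that the passage from $r-1$ to $r$ adds exactly $r-1$ new points, giving $\lvert \Gamma_r \rvert = \frac{(r-1)(r-2)}{2} + (r-1) = \frac{r(r-1)}{2}$, as desired.

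To carry this out I would verify three facts about $\Gamma_r = \tau_{\boldsymbol{\zeta}_r}(\Gamma_{r-1}) \cup \Delta_r$. First, a direct count gives $\lvert \Delta_r \rvert = r - 1$: the set is indexed by $i \in \{1, \ldots, r\}$ with a single excluded value, and distinct $i$ clearly yield distinct points. Second, $\tau_{\boldsymbol{\zeta}_r}$ is injective: since $\lambda_0(\boldsymbol{\zeta}_r) = \frac{r-1}{r} > 0$, the affine map $\tau_{\boldsymbol{\zeta}_r}$ is invertible by the remark following \eqref{eq: tau map}, hence $\lvert \tau_{\boldsymbol{\zeta}_r}(\Gamma_{r-1}) \rvert = \lvert \Gamma_{r-1}\rvert$. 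Third---and this is the crucial step---the union is disjoint, i.e. $\tau_{\boldsymbol{\zeta}_r}(\Gamma_{r-1}) \cap \Delta_r = \emptyset$.

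The disjointness is where the specific choice of $\boldsymbol{\zeta}_r$ and $\Delta_r$ enters, and I expect it to be the main obstacle. The key computation is to express $\tau_{\pmb{\xi}}$ in barycentric coordinates: if $\pmb{\xi}$ has coordinates $(\xi_0, \xi_1, \xi_2)$ and a point $\pmb{x}$ has coordinates $(\mu_0, \mu_1, \mu_2)$, then a short calculation from \eqref{eq: tau map} shows that $\tau_{\pmb{\xi}}(\pmb{x})$ has barycentric coordinates $(\xi_0\mu_0,\ \xi_0\mu_1 + \xi_1,\ \xi_0\mu_2 + \xi_2)$. For $r$ odd one has $\boldsymbol{\zeta}_r = (\frac{r-1}{r}, 0, \frac{1}{r})$, so every point of $\tau_{\boldsymbol{\zeta}_r}(\Gamma_{r-1})$ has third barycentric coordinate $\frac{r-1}{r}\mu_2 + \frac{1}{r} \geq \frac{1}{r} > 0$, whereas every point of $\Delta_r$ has third barycentric coordinate equal to $0$; the two sets are therefore disjoint. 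For $r$ even the same argument applies with the roles of the second and third coordinates interchanged, using $\boldsymbol{\zeta}_r = (\frac{r-1}{r}, \frac{1}{r}, 0)$ and the fact that the points of $\Delta_r$ have vanishing second coordinate. Combining the three facts gives the recursion $\lvert \Gamma_r \rvert = \lvert \Gamma_{r-1} \rvert + (r-1)$, which closes the induction. Geometrically, this recursion is exactly the ``adding a stripe'' picture of Remark \ref{rmk:hierar}: the old points are pushed into the interior away from one edge by $\tau_{\boldsymbol{\zeta}_r}$, while $\Delta_r$ places $r-1$ fresh points along that edge.
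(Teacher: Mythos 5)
Your proof is correct and follows the same route as the paper: induction on $r$ via the recursion $\lvert\Gamma_r\rvert = \lvert\Gamma_{r-1}\rvert + \lvert\Delta_r\rvert = \frac{(r-1)(r-2)}{2} + (r-1)$. The only difference is that you explicitly verify the disjointness of $\tau_{\boldsymbol{\zeta}_r}(\Gamma_{r-1})$ and $\Delta_r$ (via the barycentric formula $\tau_{\pmb{\xi}}(\pmb{x}) = (\xi_0\mu_0, \xi_0\mu_1+\xi_1, \xi_0\mu_2+\xi_2)$ and the sign of the second or third coordinate) and the injectivity of $\tau_{\boldsymbol{\zeta}_r}$, both of which the paper simply asserts.
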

\begin{proof}
We use induction on $r$. The result clearly holds for $r = 2$, see Figure \ref{fig:cells}. For $r>2$ the sets $\tau_{\boldsymbol{\zeta}_r}(\Gamma_{r-1})$ and $\Delta_r$ are disjoint, therefore the cardinality of $\Gamma_r$ is given by 
\begin{align*}
    \lvert \Gamma_r \rvert & = \lvert \Gamma_{r-1} \rvert + \lvert \Delta_r \rvert\\
                           & = \frac{(r-1)(r-2)}{2} + r - 1\\
                           & = \frac{r ( r - 1 ) }{ 2 }.
\end{align*} 
This concludes the proof. \end{proof}

To prove unisolvence of weights here defined we shall work as follows. Consider the sequence
\begin{equation} \label{eq:sequence} \mathcal{P}_r \Lambda^0 (T) \xrightarrow{\der} \mathcal{P}_{r-1} \Lambda^1 (T) \xrightarrow{\der} \mathcal{P}_{r-2} \Lambda^2 (T).
\end{equation}
The first and the last space are isomorphic under the action of the (smooth) Hodge star operator $ \star $ \cite{MarsdenBook}. This rather easy fact induces an interesting consequence, which consists in the fact that techniques adopted to prove unisolvence of the spaces at the extremity of \eqref{eq:sequence} are very close. On the contrary, unisolvence for the central space is obtained without direct computations but just relying on the structure of the sequence \eqref{eq:sequence} itself.
\begin{comment}
We consider the Bernstein basis of $\mathcal{P}_r(\R^n)$:
\begin{equation}
    \mathcal{B}\mathcal{P}_r(\R^n) := \{\lambda^{\pmb{\alpha}} \mid \pmb{\alpha}\in\mathcal{I}(r,n) \}.
\end{equation}
Denote with $L_r(T)$ the principal lattice of degree $r$ on the $n$-simplex $T$. Denote with $\phi_L: \mathcal{B}\mathcal{P}_r(\R^n)\to L_r(T)$ the following one-to-one map:
\begin{equation*}
    \phi_L: \lambda^{\pmb{\alpha}} \mapsto \frac{1}{r}(\alpha_0,\ldots,\alpha_n).
\end{equation*}
\end{comment}

We are ready to prove the unisolvence of $\mathcal{F}$. 
\begin{theorem} \label{thm:main}
If the assumptions of Theorem 1 hold, then $\mathcal{F}$ is a unisolvent and minimal physical sysdofs. 
\end{theorem}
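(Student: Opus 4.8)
The plan is to reduce everything to the cohomological criterion: by the de Rham characterization lemma, $\mathcal{F}$ is unisolvent and minimal if and only if the de Rham map $\mathfrak{R}^k$ is an isomorphism for each $k = 0,1,2$. First I would verify the two structural hypotheses that make diagram \eqref{eq: diagram} usable. Namely, that $\mathcal{F}^\bullet(T)$ is a genuine cellular complex, so that the boundary of every cell in $\mathcal{F}^2(T)$ is a union of small $1$-simplices of $\mathcal{F}^1(T)$ whose endpoints lie in $L_r(T) = \mathcal{F}^0(T)$; this holds by the very definition of $\mathcal{F}^1$. And that the cells of $\mathcal{F}^2(T)$ pave $T$, so that the bottom row of \eqref{eq: diagram} is exact. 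The paving must be read off directly from the staircase construction \eqref{eq : def case k = 2}, since deriving it from unisolvence would be circular.

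I would then treat the two extreme degrees by hand. For $k = 0$ the map $\mathfrak{R}^0$ sends a polynomial of $\mathcal{P}_r(T)$ to its values on $L_r(T)$, so injectivity is exactly poisedness of the principal lattice, which is classical; since $|L_r(T)| = \binom{r+2}{2} = \dim\mathcal{P}_r(T) = \dim C^0$, the map $\mathfrak{R}^0$ is an isomorphism. For $k = 2$ the cardinality lemma gives $\dim\mathcal{P}_{r-2}\Lambda^2(T) = r(r-1)/2 = |\Gamma_r| = \dim C^2$, so it again suffices to prove injectivity: assuming $\int_{s_i}\omega = 0$ for every cell, I would show that each full integral $\int_{\tau_{\pmb{\xi}_i}(T)}\omega$ also vanishes and conclude $\omega = 0$ by Theorem 1, whose hypotheses are satisfied because $\Gamma_r$ is a GC, hence poised, subset of $L_r(T)$ with $\lambda_0 > 0$.

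The bridge between the cell-weights $\int_{s_i}\omega$ and the triangle integrals $\int_{\tau_{\pmb{\xi}_i}(T)}\omega$ is the crux and the main obstacle. I would establish that the family $\{\tau_{\pmb{\xi}}(T)\}_{\pmb{\xi}\in\Gamma_r}$ is laminar, i.e. any two of its members are either nested or have disjoint interiors, which is precisely what the recursive definition of $\Gamma_r$ and the staircase cutting \eqref{eq : def case k = 2} are engineered to produce. Ordering the points by increasing $\lambda_0$, the laminar structure yields $\int_{\tau_{\pmb{\xi}_i}(T)}\omega = \sum_{j\,:\,\tau_{\pmb{\xi}_j}(T)\subseteq\tau_{\pmb{\xi}_i}(T)}\int_{s_j}\omega$, that is, a unitriangular change of basis between the two families of functionals; hence vanishing of all weights forces vanishing of all triangle integrals. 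Verifying this laminar and paving geometry carefully is where the real work lies.

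Finally, with $\mathrm{Id}$, $\mathfrak{R}^0$ and $\mathfrak{R}^2$ isomorphisms and both rows of \eqref{eq: diagram} exact (the top by compatibility of the finite element system, the bottom by the paving established above), the five lemma applied to $X^1(T) \to C^1(\mathcal{F}^\bullet(T))$ forces $\mathfrak{R}^1$ to be an isomorphism as well; this is the promised treatment of the central space ``without direct computations''. Since all three de Rham maps are isomorphisms, the characterization lemma yields that $\mathcal{F}$ is a unisolvent and minimal physical sysdofs.
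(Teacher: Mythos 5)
Your proposal follows essentially the same route as the paper: unisolvence for $k=0$ via poisedness of $L_r(T)$, for $k=2$ by reducing the vanishing of the cell weights to the vanishing of the triangle integrals $\int_{\tau_{\pmb{\xi}}(T)}\omega$ and invoking Theorem 1, and for $k=1$ by the Five Lemma applied to diagram \eqref{eq: diagram}. If anything, you are more explicit than the paper at the two points it glosses over, namely the nested (laminar) structure of the family $\{\tau_{\pmb{\xi}}(T)\}_{\pmb{\xi}\in\Gamma_r}$ that turns ``by linearity of the integral'' into a genuine unitriangular relation between weights and triangle integrals, and the paving of $T$ by the cells of $\mathcal{F}^2(T)$ required for exactness of the bottom row.
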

\begin{proof}
The minimality holds by construction for $k = 0$ and $k = 2$. For $k = 0$, unisolvence is just the standard Lagrange unisolvence on poised sets. For $k = 2$, let $\omega \in\mathcal{P}_{r-2}\Lambda^2(T)$ and assume that $w(\omega,s) = 0$ for each $s\in\mathcal{F}^2(T)$. Then, by linearity of the integral, it follows that 
\[ \int_{\tau_{\pmb{\xi}}(T)}\omega = 0,\quad\forall\pmb{\xi} \in \Gamma. \]
Then Theorem 1 implies $\omega = 0$. Finally, for $k = 1$, consider the following diagram: 
\begin{equation*}
    \small\begin{tikzcd}
0 \arrow[r] & \R \arrow[r, "\iota", hook] \arrow[d, "\Id"] & \mathcal{P}_r\Lambda^0(T) \arrow[r, "d"] \arrow[d, "\mathfrak{R}^0"] & \mathcal{P}_{r-1}\Lambda^1(T) \arrow[r, "d"] \arrow[d, "\mathfrak{R}^1"] & \mathcal{P}_{r-2}\Lambda^2(T) \arrow[r] \arrow[d, "\mathfrak{R}^2"] & 0 \\
0 \arrow[r] & \R \arrow[r, "\psi"]                         & C^0(\mathcal{F}^{\bullet}(T)) \arrow[r, "\delta"]                    & C^1(\mathcal{F}^{\bullet}(T)) \arrow[r, "\delta"]                        & C^2(\mathcal{F}^{\bullet}(T)) \arrow[r]                             & 0
\end{tikzcd}
\end{equation*}
We already know that the rows are exact and we have just showed that the maps $\mathfrak{R}^0$ and $\mathfrak{R}^2$ are isomorphisms. Then, by the Five Lemma (see section 2.1 of \cite{Hatcher}) it follows that also $\mathfrak{R}^1$ is an isomorphism. In particular minimality holds also for $k=1$. 
\end{proof}

The idea of proving the unisolvence of the intermediate space $k = 1$ using the Five Lemma appeared for the first time in \cite{ZAR}, but it was not exploited since a proof of the unisolvence for the case $k = 2$ was lacking. The problem with the physical sysdofs defined in \cite{ZAR} is that the $2$-cells cannot be written as differences of small $2$-simplices as in \eqref{eq : def case k = 2} and therefore Theorem 1 does not apply. 

We remark an interesting aspect. For $ k = 1 $, the set involved is the set of small simplices defined in \cite{nostropreprint}, which is a subset of that of \emph{small simplices} introduced by Bossavit in \cite{RapettiBossavit}. Interestingly, for $ k = 2 $ one does not find its $2$-dimensional counterpart, but the set $ \Sigma_r^2 (T) $ defined in \cite{ENUMATH}. To the authors' knowledge, this is the first construction in which those sets appears paired in such a natural fashion.

\section{Numerical tests} \label{sect:NumTest}

We offer a computational proof of unisolvece exploiting Lemma \ref{eq: alternative unisolvence}. We compute the conditioning number of the Vandermonde matrices of the sequence
$$ \mathcal{P}_r \Lambda^0 (T) \rightarrow \mathcal{P}_r \Lambda^1 (T) \rightarrow \mathcal{P}_r \Lambda^2 (T), $$
for $ r-2 = 1, \ldots, 4 .$ These quantities are reported in Table \ref{tab:condVk0} and confirm, up to the considered degree, the theoretical statement proved in Theorem \ref{thm:main}. The basis chosen for such computations is the monomial one, and barycentric coordinates offer a compact way to visualise it. In particular, when $k= 0$, it is defined as $ \boldsymbol{\lambda}^{\boldsymbol{\alpha}} $ with $ |\boldsymbol{\alpha} | = r$. When $ k = 1 $ it is defined as $ \boldsymbol{\lambda}^{\boldsymbol{\alpha}} \mathrm{d} x + \boldsymbol{\lambda}^{\boldsymbol{\beta}} \mathrm{d} y$ with $ |\boldsymbol{\alpha} | = r $ and $ |\boldsymbol{\beta} | = 0 $ and $ |\boldsymbol{\beta} | = r $ and $ |\boldsymbol{\alpha} | = 0 $. Finally, for $k = 2$, such a basis is $ \boldsymbol{\lambda}^{\boldsymbol{\alpha}} \mathrm{d} x \wedge \mathrm{d} y$, again with $ |\boldsymbol{\alpha} | = r $. Results for $ r = 1, \ldots, 6 $ are reported in Table \ref{tab:condVk0}. To improve conditioning numbers Bernstein bases or orthogonal polynomials shall be taken into account. However, unisolvence is clearly independent from the choice of the basis for $ \mathcal{P}_r \Lambda^k (T) $ and we thus leave this problem of optimisation to further investigations.

\begin{table}[h]
    \centering
    \begin{tabular}{c|ccc}
      $ r $ & $ k = 0 $ & $ k = 1$ & $ k = 2 $ \\
      \hline
      $ 1 $ & $ 3.7320 \times 10^0 $ & $ 4.4985 \times 10^0 $ & $ 3.1682 \times 10^1 $ \\
      $ 2 $ & $ 3.0969 \times 10^1 $ & $2.3281 \times 10^1 $ & $ 5.2130 \times 10^2 $ \\
      $ 3 $ & $ 3.1245 \times 10^2 $ & $ 8.6268 \times 10^1 $ & $ 9.3809 \times 10^3 $ \\
      $ 4 $ & $ 3.4290 \times 10^3 $ & $ 5.6267 \times 10^2 $  & $ 1.3525 \times 10^6 $ \\
      $ 5 $ & $ 3.9513 \times 10^4 $ & $2.9791 \times 10^3 $ & -- \\
      $ 6 $ & $ 4.7004 \times 10^5 $ & -- & -- \\
    \end{tabular}
    \caption{Conditioning number of the Vandermonde matrix for $k=0, 1, 2$.}
    \label{tab:condVk0}
\end{table}
 
\begin{remark}
We stress that Table \ref{tab:condVk0} shall be read \emph{diagonally}. In particular, when the degree for $ k = 0 $ is $ r $, the corresponding data for $ k = 1 $ and $ k = 2 $ are, respectively, those associated with $ r - 1 $ and $ r - 2 $.
\end{remark}

\subsection{Some interpolation tests}
In section \ref{sect:interpdef} we have defined how an interpolator can be defined using weights and we have briefly discussed its features. In particular, we showed that under the hypothesis of Theorem \ref{thm:main} such an operator is well defined and commutes with the exterior derivative. We now give an explicit meaning of this fact, using weights to interpolate a $ 0 $-form $ \omega $ and its differential $ \der \omega \in \Lambda^1 (T) $. For ease of the reader we deal with the standard $2$-simplex. This is not restrictive, since one may always reduce to this case by passing to barycentric coordinates. We thus consider a $ 0 $-form
$$ \omega = e^{x} \sin(\pi y),$$
whence
$$ d \omega = e^{x} \sin (\pi y) \der x + \pi e^x \cos(\pi y) \der y.$$
We interpolate by means of the interpolator \eqref{eq:interpolator} and study the convergence as $ r $ increases. The most informative norm for such a situation is the $0$-norm \cite{Harrison}, which is defined as
\begin{equation}
    \Vert \omega \Vert_0 \doteq \sup_{c \in \mathcal{C}_k (T)} \frac{1}{|c|_0}\left\vert \int_c \omega \right\vert,
\end{equation}
being $ |c|_0 $ the $k$-th volume of the $k$-simplex $ T $ and $ \mathcal{C}_k (T) \doteq \mathcal{C}_k(\mathcal{F}^{\bullet}(T))$ the set of all possible $k$-chains supported in $ T $.
Results are reported in Table \ref{tab:conv}, where a comparison with the corresponding points for $ k = 0 $ is included, and shown in Figure \ref{fig:conv}.

\begin{table}[h]
    \centering
    \begin{tabular}{c|cc}
    & $ k = 0 $ & $ k = 1 $ \\
    \hline 
    $ r $ & $ \Vert \omega - \Pi \omega \Vert_0 $ & $ \Vert \omega - \Pi \omega \Vert_0 $ \\
        \hline
      $ 1 $ & -- & $ 2.5334 $ \\
      $ 2 $ & $ 0.3377 \times 10^0 $ & $ 1.1224 $ \\
      $ 3 $ & $ 0.6967 \times 10^{-1} $ & $ 0.4292 $ \\
      $ 4 $ & $ 0.1792 \times 10^{-1} $ & $ 0.0782 $ \\
      $ 5 $ & $ 0.1600 \times 10^{-2} $ & $ 0.0171 $ \\
      $ 6 $ & $ 0.4314 \times 10^{-3} $ & --
    \end{tabular}
    \caption{Trend of $ \omega - \Pi \omega $ with respect to the $0$-norm for the $ 1 $-form $ \omega $ above defined and its potential. The $ 0 $-norm for of the function $ k = 0 $ is approximately $ 1.7319 $ whereas $ \Vert \omega \Vert_0 \sim 2.5334 $ for the case $ k = 1 $.} \label{tab:conv}
\end{table}

\begin{figure}[h]
    \centering
    \includegraphics[scale = .34]{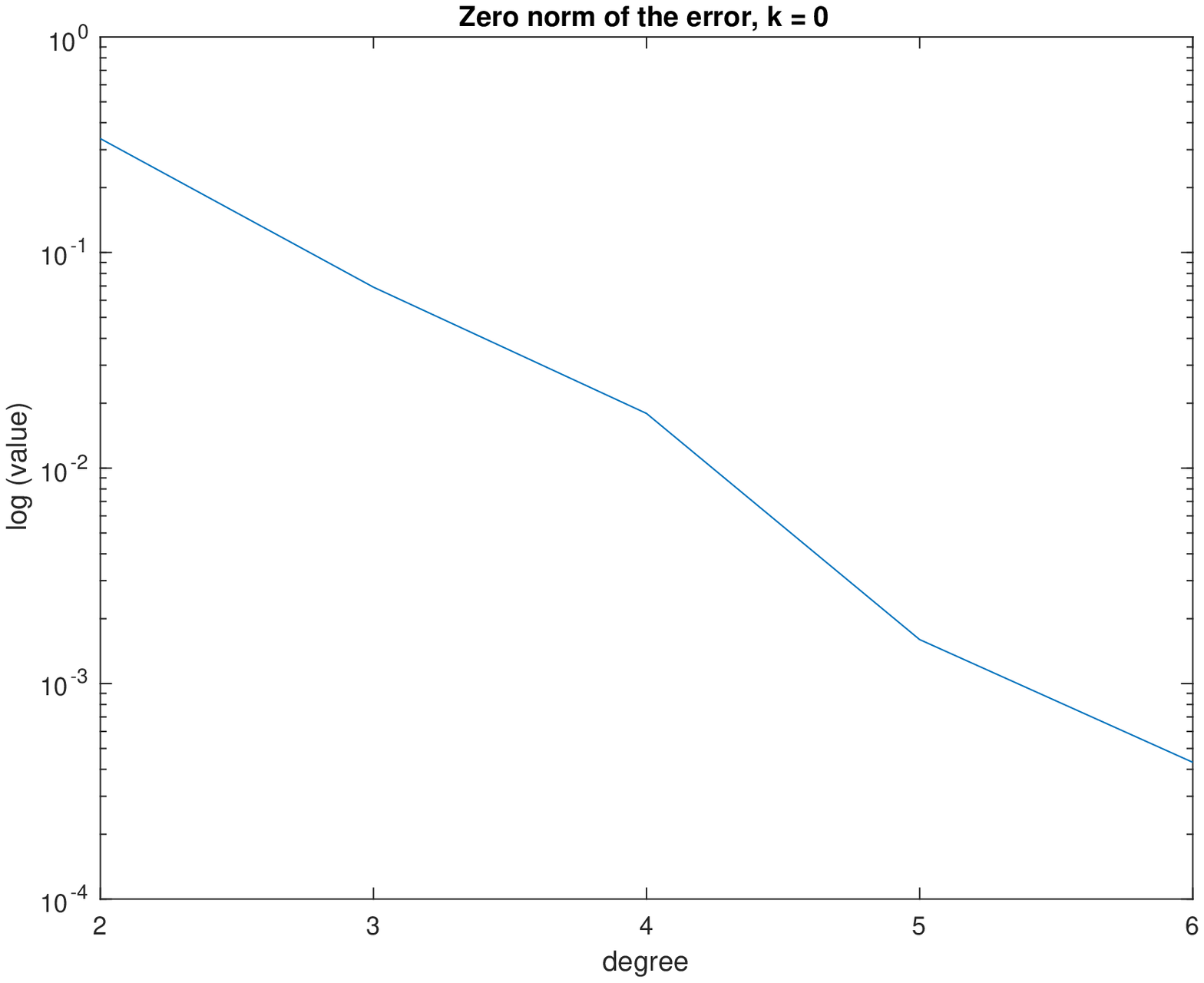}
    \quad 
    \includegraphics[scale = .34]{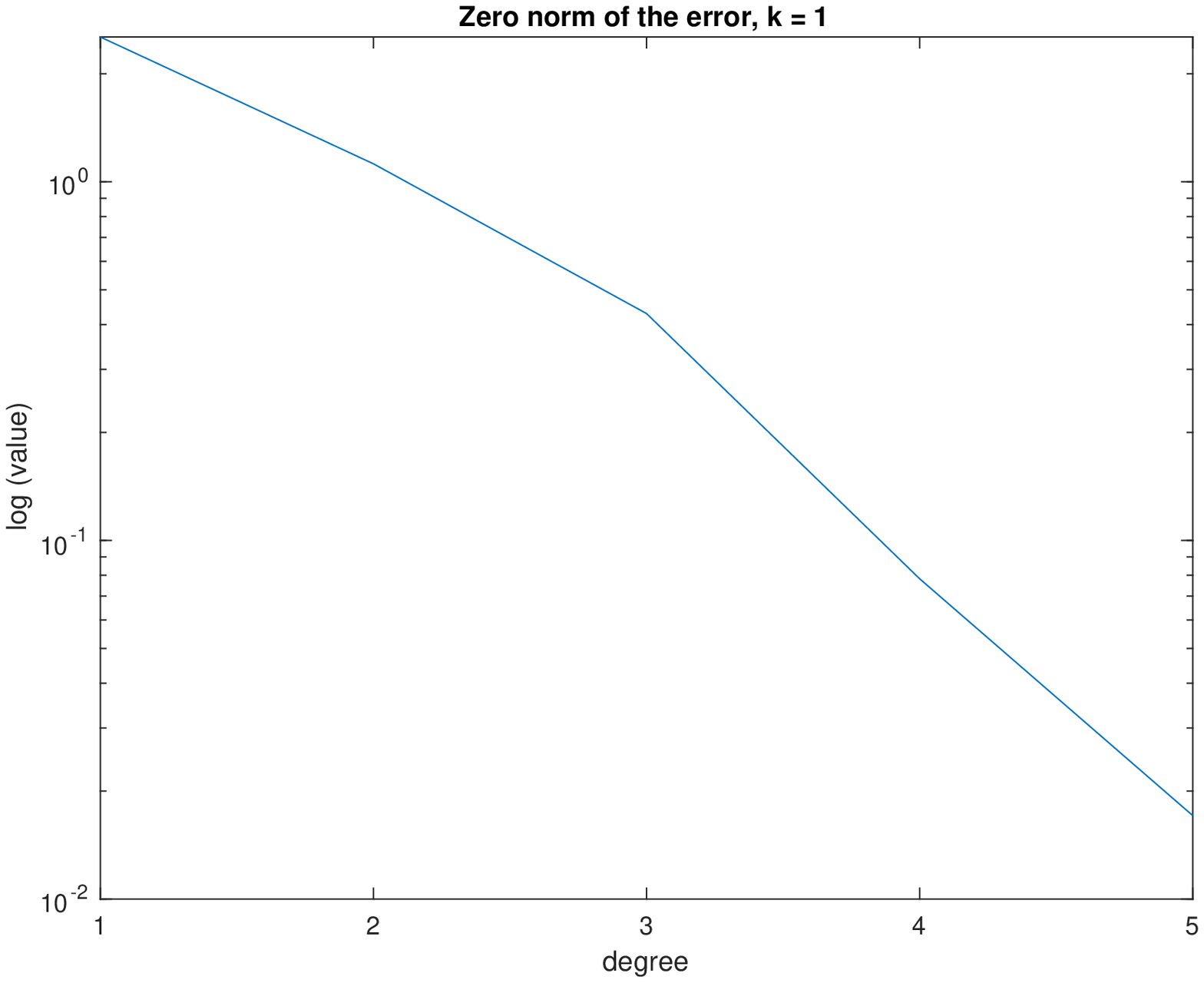}
    \caption{Plot of the convergence, a comparison for the nodal case $ k = 0 $ and the simplicial case $ k = 1 $ in semi-logarithmic scale. Left, the case for $ k = 0$ and right, that for $ k = 1 $. Notice the degree shift, explained by the sequence.}
    \label{fig:conv}
\end{figure}

\section{Conclusions and future directions} \label{sect:concl}
In this work we have proposed new physical degrees of freedom for the second family $\mathcal{P}_{r-k}\Lambda^k$ in the two dimensional case. We have proved rigorously their unisolvence and we have showed their effectiveness with an interpolation test. 

The three dimensional case is trickier. In principle one could use the same technique to construct unisolvent and minimal physical degrees of freedom for the case $k=3$, but unisolvence and minimality of the intermediate spaces in the sequence, that is $k = 1$ and $k=2$, will not follow trivially since the Five Lemma cannot be applied in this situation. This will be the object of future research.  

\bibliographystyle{plain}
\bibliography{bibliografia.bib}

\end{document}